\newtheorem{thm}{Theorem}[section]
\newtheorem{cor}[thm]{Corollary}
\newtheorem{lem}[thm]{Lemma}
\newtheorem{prop}[thm]{Proposition}
\theoremstyle{definition}
\newtheorem{rem}[thm]{Remark}
\newtheorem{thmintro}{Theorem}
\title{Formality is preserved under domination}
\begin{document}
    \author{A. Milivojevi\'c}
	\address{Max-Planck-Institut f\"ur Mathematik,
		Vivatsgasse 7,
		53111 Bonn}
	\email{milivojevic@mpim-bonn.mpg.de}
	\author{J. Stelzig}
	\address{ Mathematisches Institut der Ludwig-Maximilians-Universit\"at M\"unchen,
		Theresienstraße 39, 80993 M\"unchen}
	\email{jonas.stelzig@math.lmu.de}
	\author{L. Zoller}
	\address{ Mathematisches Institut der Ludwig-Maximilians-Universit\"at M\"unchen,
		Theresienstraße 39, 80993 M\"unchen}
	\email{zoller@math.lmu.de}
	\subjclass[2020]{55P62, 55S30, 57N65}
	\keywords{Poincar\'e duality algebras, Massey products, non-zero degree maps, formality}
	
	\begin{abstract}
If a closed orientable manifold (resp.\ rational Poincar\'e duality space) $X$ receives a map $Y \to X$ from a formal manifold (resp. space) $Y$ that hits a fundamental class, then $X$ is formal. The main technical ingredient in the proof states that given a map of $A_\infty$-algebras $A\to B$ admitting a homotopy $A$-bimodule retract, formality of $B$ implies that of $A$.
	\end{abstract}

\maketitle

\setcounter{tocdepth}{1}
\tableofcontents

\section{Introduction}

A basic relation one can consider among manifolds is that of \emph{domination}: For equidimensional closed orientable manifolds, one says $Y$ dominates $X$ if there is a non-zero degree map $Y \to X$. In this situation, a general heuristic says that $X$ is ``simpler'' than $Y$. From a rational homotopy theoretic point of view, a formal space is the simplest space with a given cohomology ring and it is therefore natural to ask whether the property of formality is preserved by dominant maps. Formality here refers to the property that the commutative differential graded algebra of differential forms can be connected by quasi-isomorphisms to its cohomology equipped with trivial differential. In line with the above heuristic, our main result is:

\begin{thmintro}\label{thmA}
If $Y$ dominates $X$, and $Y$ is formal, then $X$ is formal.
\end{thmintro}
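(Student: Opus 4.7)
The plan is to reduce \Cref{thmA} to the technical $A_\infty$-result stated in the abstract: an $A_\infty$-morphism $A \to B$ admitting a homotopy $A$-bimodule retract transfers formality from $B$ to $A$. Let $A$ and $B$ denote minimal $A_\infty$-models of $A_{PL}(X)$ and $A_{PL}(Y)$, supported on $H^*(X;\mathbb{Q})$ and $H^*(Y;\mathbb{Q})$ respectively, and let $\phi\colon A \to B$ be the $A_\infty$-morphism induced by a dominant map $f\colon Y \to X$. Formality of $Y$ means the $A_\infty$-structure on $B$ collapses to its underlying graded algebra, and formality of $X$ is the analogous statement for $A$. So the task reduces to producing a homotopy $A$-bimodule retract of $\phi$.

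To build the retract, I would use Poincar\'e duality together with $d := \deg f \neq 0$. At the cohomology level, $H^*(X;\mathbb{Q})$ carries a non-degenerate pairing $\langle -,-\rangle_X$ given by integration against the fundamental class, and similarly for $Y$. Define $f_!\colon H^*(Y) \to H^*(X)$ by adjunction, $\langle f_!(\beta), \alpha\rangle_X = \langle \beta, f^*(\alpha)\rangle_Y$. A direct check yields $f_! \circ f^* = d \cdot \mathrm{id}$, and the projection formula $f_!(f^*(\alpha) \cdot \beta \cdot f^*(\gamma)) = \alpha \cdot f_!(\beta) \cdot \gamma$ makes $f_!$ an $H^*(X)$-bimodule map. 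Rescaling by $1/d$ gives a strict bimodule retract at the level of cohomology.

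The main obstacle is lifting this cohomological retract to a genuine homotopy $A_\infty$-bimodule retract. To this end, I would work with cyclic (equivalently, Poincar\'e duality) minimal $A_\infty$-models, which are available for Poincar\'e duality spaces via the Lambrechts--Stanley construction and its $A_\infty$-refinements. In a cyclic setting, the non-degenerate pairing is compatible with all higher $A_\infty$-operations, and the adjoint construction extends to yield an $A_\infty$-bimodule morphism $f_!\colon B \to A$ whose higher components are forced by the cyclicity condition and non-degeneracy of the pairing. Rescaling by $1/d$ and checking that $f_! \circ \phi$ is homotopic to the identity as an $A_\infty$-bimodule map then provides the desired retract, whereupon the technical lemma yields formality of $A$, equivalently of $X$.
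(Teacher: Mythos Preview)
Your overall strategy matches the paper's: reduce to \Cref{thmB} by producing an $A_\infty$-$A$-bimodule retract of $f^*$ from Poincar\'e duality. The cohomological part (transfer $f_!$, projection formula, $f_!f^*=d\cdot\mathrm{id}$) is correct and is precisely the shadow of what is needed.

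The gap is in the lifting step. Invoking cyclic minimal models and Lambrechts--Stanley does not do the job: those constructions furnish Poincar\'e duality models for $X$ and $Y$ \emph{separately}, with no functoriality, so there is no reason the $A_\infty$-map $\phi$ induced by $f$ can be realized as a map between the chosen cyclic models; and even granting that, ``higher components forced by cyclicity'' is an assertion, not a construction --- you have not said what $(f_!)_{p,q}$ is or why the bimodule-morphism equations \eqref{eqn: Bimodule morphism} hold. The paper sidesteps all of this by \emph{not} passing to minimal models. It stays at the cdga level and writes the strictly commuting square of dg-$A_X$-modules with vertical maps $\Phi_X\colon A_X\to DA_X[n]$, $\Phi_Y\colon A_Y\to DA_Y[n]$ given by wedge-and-integrate and horizontal maps $f^*$, $\frac{1}{d}(f^*)^\vee$. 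Every arrow is an honest dg-module map and commutativity is a one-line check; Poincar\'e duality says $\Phi_X$ is a quasi-isomorphism, and the only $A_\infty$ input is the general fact that a quasi-isomorphism of $A_\infty$-bimodules admits a quasi-inverse. Composing that quasi-inverse with $\frac{1}{d}(f^*)^\vee\circ\Phi_Y$ produces the retract directly, with no cyclic structures and no higher components to guess.
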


In fact, we prove the result for the following two slightly different generalizations of the notion of dominance of a map $f:Y\to X$, without assumptions on the dimensions, both in the spirit of \cite{CT89}.

\begin{enumerate}

\item\label{sit:PD} $f$ is a continuous map from a space to a rational Poincar\'e duality space, inducing a surjection in top degree rational homology of $X$.\footnote{Here, formality refers to that of the cdga of piecewise polynomial rational forms in the sense of Sullivan \cite{S77}.}

\item\label{sit:mfds} $f$ is a proper, smooth map between smooth orientable manifolds, such that a fundamental class in Borel-Moore homology of $X$ is in the image of $f_*$.
\end{enumerate}
It is equivalent to require surjectivity in (Borel--Moore) homology in all degrees. Both cases overlap in the case of $f:Y\to X$ being a map of smooth closed orientable manifolds.

For example, $f$ could be a finite (ramified) covering map or an orientable fibration with surjective restriction map $H(Y)\to H(F)$ to the cohomology of the fibre. Applying this to the twistor fibration of a compact positive quaternion-K\"ahler manifold, one recovers formality of the latter, first proved in \cite{AK12}. Likewise, $X$ could be an algebraic variety satisfying rational Poincar\'e duality and $f$ a resolution of singularities, recovering \cite[Theorem 5]{H86}, see also \cite[Section 3]{ChCi17}. Furthermore, since two-dimensional surfaces are formal, the fundamental class of any non-formal oriented manifold cannot be mapped to by a product of surfaces (or any other formal manifolds), confirming a remark of Gromov in these cases \cite[p.301]{G99}; this complements results by Kotschick--Löh \cite{KL09}, who exhibited obstructions to domination by products of a different nature. 

Our results are inspired by \cite[Theorem 5.22]{DGMS75}, see also \cite{Me22}, that the $\partial \overline{\partial}$-lemma is preserved under dominant holomorphic maps of compact complex manifolds, and a theorem of Taylor \cite{Ta10} that non-trivial triple Massey products pull back non-trivially under non-zero degree maps of rational Poincar\'e spaces. In the case of $X$ and $Y$ being rational Poincar\'e duality spaces of dimension $\leq 5n+2$ where $X$ is cohomologically $n$--connected, \Cref{thmA} follows from the naturality of the Bianchi--Massey tensor constructed in \cite{CN20}.

As the essential argument and computation is contained in the case of $Y$ and $X$ being closed manifolds of the same dimension $n$, let us outline how to treat this case: The map $f$ gives rise to a commutative diagram
    \begin{equation}\label{diag: duality}
\begin{tikzcd}
DA_X[n]&&DA_Y[n]\ar[ll,"\frac{1}{d}(f^*)^\vee",swap]\\
A_X\ar[u,"\Phi_X"]\ar[rr, "f^*"]&&A_Y\ar[u,"\Phi_Y",swap],
\end{tikzcd}
\end{equation}
where $A_X,A_Y$ denote the cdga's of differential forms and $DA_X[n]$, $DA_Y[n]$ are the (degree shifted) dual complexes of $A_X$, resp. $A_Y$, which are naturally differential graded modules over $A_X$, resp.\ $A_Y$ by precomposition. The vertical maps are the morphisms of differential graded modules given by wedge-product and integration over a fundamental class. By Poincaré duality they are in fact quasi-isomorphisms.

We would like to invert $\Phi_X$ to obtain a module-retract of $f^*$, at least up to homotopy. This is not always possible in the world of cdga's and their modules. It is however, if we work with $A_\infty$-algebras and their bimodules, recalled below. These also have the advantage of making the obstructions to formality transparent in terms of higher operations (operadic versions of the classical Massey products). \Cref{thmA} then follows from the following purely algebraic statement:

\begin{thmintro}\label{thmB}
		Let $f:A\to B$ be a morphism of $A_\infty$-algebras in characteristic $0$ admitting a $A_\infty$-$A$-bimodule homotopy retract. If $B$ is formal, then so is $A$. More generally, if $B$ is quasi-isomorphic to a minimal $A_\infty$-algebra with $m_i = 0$ for $3\leq i \leq k$ for some $k\in\mathbb N\cup\{\infty\}$, then the same holds for $A$.

\end{thmintro}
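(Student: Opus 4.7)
The strategy is to proceed by induction on $k$, using obstruction theory for $A_\infty$-structures at the level of Hochschild cohomology. First, by Kadeishvili's homological perturbation theorem, I replace $A$ and $B$ by their minimal models $(H^*(A), m_\bullet^A)$ and $(H^*(B), m_\bullet^B)$, in which $m_2$ is the cup product. The $A_\infty$-algebra morphism $f$ and its $A_\infty$-$A$-bimodule homotopy retract $\pi$ transfer to the minimal models, and after a further $A_\infty$-automorphism of $B$, I may assume $m^B_i = 0$ for $3 \leq i \leq k$.

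I then argue inductively that the higher operations of $A$ can likewise be made trivial in the range $3 \leq i \leq k$ via successive $A_\infty$-automorphisms. Suppose $m^A_i = 0$ for $3 \leq i < k$. The $A_\infty$-relations then force $m^A_k$ to be a Hochschild cocycle of $H^*(A)$ with values in itself, and its class $[m^A_k] \in HH^{k,\,2-k}(H^*(A);\,H^*(A))$ is precisely the obstruction to absorbing $m^A_k$ via an $A_\infty$-automorphism of $A$. To show that this class vanishes, I use the bimodule retract. The $A_\infty$-$A$-bimodule map $f$ induces a pushforward on Hochschild cohomology with values in $H^*(B)$ (viewed as an $H^*(A)$-bimodule via $f^*$), and the retract $\pi$ splits this pushforward at the level of complexes. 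A cochain-level computation identifies the image of $[m^A_k]$ under the pushforward with a class assembled from the higher operations of $B$ and the components $f_j$ of the $A_\infty$-morphism; once $m^B_i=0$ for $3 \leq i \leq k$, this image is an explicit coboundary. Applying the splitting yields $[m^A_k]=0$, closing the induction.

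The main obstacle is the cochain-level step identifying the pushforward of $[m^A_k]$ with an expression in $m^B_\bullet$ and $f_\bullet$ that becomes a coboundary under the partial formality hypothesis on $B$. One must carefully unpack the $A_\infty$-relations governing $f$ as an $A_\infty$-bimodule map and organize them into an explicit coboundary witness, tracking the interaction between the $A_\infty$-algebra and $A_\infty$-bimodule operations. Conceptually the picture is clean, however: the bimodule retract realizes $A$ as a homotopy direct summand of $B$ in the derived category of $A_\infty$-$A$-bimodules, so every higher obstruction on $A$ is ``visible'' on $B$, where it vanishes by hypothesis.
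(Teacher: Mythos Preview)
Your plan follows the same obstruction-theoretic framework as the paper (which the authors themselves remark upon): pass to minimal models, induct on $k$, and view $m_{k}^A$ as a Hochschild cocycle whose class must be shown to vanish. The paper does not factor through $HH^*(A;B)$, however; it constructs an explicit primitive $\varphi_{k-1}\in C^{k-1}(A;A)$ directly, as the weighted sum
\[
\varphi_{k-1}\;=\;\sum_{i=0}^{k-3}\Bigl(1-\tfrac{i}{k-2}\Bigr)\sum_{a+b=i} r_{a,b}\bigl(1^{\otimes a}\otimes f_{k-1-i}\otimes 1^{\otimes b}\bigr),
\]
and then checks by hand that its Hochschild coboundary equals $m_k^A$. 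The specific rational coefficients are what make the cross-terms cancel.

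The gap in your plan is precisely in the step you flag as the main obstacle. After pushing forward along $f_1$, the $A_\infty$-morphism relation in arity $k$ (with $m_i^A=0$ for $3\le i<k$ and $m_i^B=0$ for $3\le i\le k$) gives
\[
f_1\circ m_k^A \;=\; \delta_{\mathrm{Hoch}}(f_{k-1}) \;+\; \sum_{\substack{r+s=k\\ r,s\ge 2}} m_2^B(f_r\otimes f_s)
\]
in $C^{k}(A;B)$. The second term is a sum of Hochschild cup products of the lower $f_j$'s; it does \emph{not} become a coboundary merely because the higher $m_i^B$ vanish. So $f_*[m_k^A]$ is not zero in $HH^*(A;B)$ for the reason you suggest, and the argument cannot proceed by ``show it dies in $B$, then split back.'' A related issue is that your retract $\pi$ is only an $A_\infty$-bimodule map, so post-composition with $\pi_{0,0}$ alone is not a chain map on the strict Hochschild complex; all the higher components $\pi_{a,b}$ must enter. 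When one unwinds what ``apply the splitting'' really means at the cochain level, the resulting primitive for $m_k^A$ in $C^*(A;A)$ is exactly of the shape above, built from both $r_{a,b}$ and $f_j$, and one is led to the paper's computation --- including the discovery that a nontrivial averaging with weights $1-\tfrac{i}{k-2}$ is needed to kill the cup-product terms.
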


As another application, we can recover descent of formality in characteristic zero, namely that formality after field extension implies formality, see \Cref{descent}. We note that \Cref{thmB} is a generalization of the well-known fact that a retract of a formal space is formal \cite[Example 2.88]{FOT08}, where the condition of the retract being, on the algebraic level, a morphism of ($A_\infty$-) algebras has been weakened to only being a morphism of $A_\infty$-$A$-bimodules. 

 The structure of the article is as follows: In Section 2 we concisely recall the required algebraic machinery in the form of $A_\infty$-algebras and bimodules. Subsequently, Theorem \ref{thmB} is proved in Section 3. The final Section 4 then has the purpose of transferring the argument surrounding Diagram \ref{diag: duality} outlined above to the more general geometric setups in Theorem \ref{thmA} and proving the latter.

\subsection*{Acknowledgements} A.M. thanks the MPIM in Bonn for its support, together with the LMU Munich, where part of this work was carried out, for their generous hospitality; the authors thank Dieter Kotschick for his comments.

\section{Generalities on algebras and modules} 

	We will always work over a field of characteristic zero. For a graded object $A$, we denote its suspension by $sA$. It has the same underlying space, with grading $(sA)^k=A^{k+1}$. 

	\subsection{$A_\infty$-algebras and bimodules}
    
    For a vector space $V$ we denote by $TV=\bigoplus_{i\geq 0} V^{\otimes i}$ the tensor coalgebra over $V$. Summing only over $i\geq 1$ we obtain the reduced tensor coalgebra $\overline{T}V$.
	An $A_\infty$-algebra structure on a vector space $A$ is a degree $1$ coderivation $D$ on $\overline{T}sA$ which squares to zero. Equivalently, a coderivation on $\overline{T}sA$ can be specified by a collection of maps $d_k:sA^{\otimes k}\to sA$, $k\geq 1$, of degree $1$, which can always be extended uniquely to a coderivation $D\colon \overline{T}sA\rightarrow \overline{T}sA$. The condition $D^2=0$ is equivalent to
	\begin{equation}\label{eqn: Ainfty}
		\sum_{a+b+c=n}d_{a+c+1}(1^{\otimes a}\otimes d_b \otimes 1^{\otimes c})=0
	\end{equation}
	for $n\geq 1$; here and throughout, the indices of summation are understood to be non-negative, and furthermore terms with invalid indices, e.g.\ $d_0$ (or $f_0$, $r_{a,{-1}})$ below) are set to zero. Similarly, given an $A_\infty$-algebra $(A,D)$, an $A_\infty$-bimodule structure over $(A,D)$ on a vector space $M$ is a degree $1$ codifferential $D^M$
    on the $TsA$-cobimodule $TsA\otimes sM\otimes TsA$ such that $D^M\circ D^M=0$; here $TsA=\overline{T}sA\oplus \langle 1\rangle$ inherits its differential from $\overline{T}sA$ by setting it to be trivial on $1$. This is equivalent to a collection of degree $1$ maps $d_{p,q} \colon sA^{\otimes p}\otimes sM\otimes sA^{\otimes q}\to sM$ such that for all $p,q\geq 0$ one has
	\begin{equation}\label{eqn: Ainfty bim}
		\sum_{a+b+c=p+q+1}d_{a,c}(1^{\otimes a}\otimes d_{b}\otimes 1^{\otimes c})=0,
	\end{equation}
	where this is an equation of maps $sA^{\otimes p}\otimes sM\otimes sA^{\otimes q}\to sN$, and $d$ on the left hand side is interpreted either in the module or the algebra sense, depending on its position (i.e. as $d^M_{p-a,q-c}$ or $d_b^A$, respectively), and similarly for the identity operator $1$.
	An $A_\infty$-algebra is a bimodule over itself by setting $d_{p,q}:=d_{p+q+1}$.

	\subsection{Morphisms} A morphism of $A_\infty$-algebras $(A,D^A), (B,D^B)$ is given by a morphism $f:\overline{T}sA\to \overline{T}sB$ of coalgebras such that $fD^A=D^Bf$, or equivalently by a sequence of degree $0$ maps $f_k \colon sA^{\otimes k}\to sB$ such that for every $n\geq 1$,
	\begin{equation}\label{eqn: Algebra morphism}
		\sum_{a+b+c=n} f_{a+c+1}(1^{a}\otimes d_b^A\otimes 1^{\otimes c})=\sum_{i_1+ \cdots +i_r=n} d_r^B(f_{i_1}\otimes \cdots \otimes f_{i_r})
	\end{equation}
	
	Analogously, a map of $A_\infty$-bimodules $M\to N$ over some $A_\infty$-algebra $A$ is given by a morphism between the cobimodules $TsA\otimes sM\otimes TsA\to TsA\otimes sN\otimes TsA$ which commutes with the codifferentials. Again this is described by a collection of  degree $0$ maps $r_{p,q}:sA^{\otimes p}\otimes sM\otimes sA^{\otimes q}\to sN$ such that for every $p,q\geq 0$,
		\begin{equation}\label{eqn: Bimodule morphism}
		\sum_{a+b+c=p+q+1} r_{a,c}(1^{\otimes a}\otimes d_{b}\otimes 1^{\otimes c})=\sum_{\substack{x+i=p\\y+j=q}} d_{x,y}^N(1^{\otimes x}\otimes r_{i,j}\otimes 1^{\otimes y}),
	\end{equation}
	where again this is an equation of maps from $sA^{\otimes p}\otimes sM\otimes sA^{\otimes q}\to sN$, and $d$ on the left hand side is interpreted either in the module or the algebra sense, and similarly for the identity operator $1$.

 \subsection{Signs, suspensions, and the classical notions} One can rewrite all the above equations without suspensions, at the expense of introducing signs according to the Koszul sign rule. In this case, the notation $m_k$ for the maps $s^{-1}\circ d_k\circ (s^{\otimes k})\colon A^{\otimes k}\rightarrow A$ on the unshifted spaces is more common, where $s\colon A\rightarrow sA$ is the suspension map of degree $-1$. Equation \ref{eqn: Ainfty} then becomes
 \[\sum_{a+b+c=n} (-1)^{ab+c}m_{a+c+1}(1^{\otimes a}\otimes m_b \otimes 1^{\otimes c})=0. \]
 From this one verifies that the structure of a non-unital, associative differential graded algebra (dga) on a vector space $A$ is the same as an $A_\infty$-structure on $A$ with $m_i=0$ for $i\geq 3$. Indeed $m_1$ takes the role of the differential, while $m_2$ is the multiplication. Similar considerations for morphisms show that the category of dgas embeds into that of $A_\infty$-algebras. The analogous statement holds for the category of dg $A$-bimodules over a dga $A$. Throughout the paper all algebraic structures are viewed in their respective non-unital categories unless stated otherwise.
 
 To minimize sign calculations, we work with the maps $d_k$ instead of the $m_k$ throughout.
	
	\subsection{Restriction of scalars}
	If $f:A\to B$ is a map of $A_\infty$-algebras and $M$ a $B$-bimodule, it inherits a structure of an $A$-bimodule by defining 
	\begin{equation}\label{eqn: restriction of scalars}
		d^{M/A}_{p,q}:=\sum_{\substack{i_1+\cdots+i_r=p\\
				j_1+\cdots+j_s= q}}d^M_{r,s}(f_{i_1}\otimes \cdots \otimes f_{i_r}\otimes 1_M\otimes f_{j_1}\otimes \cdots \otimes f_{j_s}).
	\end{equation}	
	In particular, one can apply this formula to $B$ itself and then $f$ induces also a map of $A$-bimodules by setting $f_{a,b}=f_{a+b+1}$. 
	For fixed $f$, restriction defines a functor $R_f$
	from $B$-bimodules to $A$-bimodules by sending a map $r:M\to N$ of $B$-bimodules to a map $R_f(r)$ of the induced $A$-bimodules defined via
	\begin{equation}
		R_f(r)_{p,q}:=\sum_{\substack{i_1+\cdots+i_a=p\\
				j_1+\cdots+j_b= q}} r_{a,b}(f_{i_1}\otimes \cdots \otimes f_{i_a}\otimes 1_M\otimes f_{j_1}\otimes \cdots \otimes f_{j_b})
	\end{equation}
	Given two maps of $A_\infty$-algebras $A\overset{f}{\to} B\overset{g}{\to} C$, restriction is compatible in the sense that $R_{g\circ f}=R_f\circ R_g$.	

\subsection{Minimality, quasi-isomorphisms, and formality} \Cref{eqn: Ainfty} in arity $1$ implies that $d_1^2=0$, so $(sA,d_1)$ is a complex and one can consider its cohomology. 
A map of $A_\infty$-algebras $f\colon A\to B$ is called a quasi-isomorphism if the induced map $f_1\colon (sA,d_1)\to (sB,d_1)$ is a quasi-isomorphism.

Similarly a morphism $M\rightarrow N$ of $A_\infty$-bimodules is a quasi-isomorphism if $r_{0,0}\colon (sM,d_{0,0})\rightarrow (sN,d_{0,0})$ is.
Any quasi-isomorphism of $A_\infty$-algebras (resp.\ bimodules) admits a quasi-inverse, i.e.\ a quasi-isomorphism in the opposite direction, which induces the inverse map in cohomology \cite[p.94]{LH03} \footnote{This applies in the augmented setting, which we can appeal to by \cite[p.81]{LH03}.}. An $A_\infty$-algebra is called minimal, if $d_1=0$. Any $A_\infty$-algebra is quasi-isomorphic to a minimal one.

An $A_\infty$-algebra $A$ is called formal if it is quasi-isomorphic to its own cohomology, i.e.\ there is a minimal $A_\infty$-model $H(A)\rightarrow A$ where the $d_i$ on the left hand side vanish, except for $d_2$, which is the natural product induced on cohomology thanks to \cref{eqn: Ainfty} in arity $2$.
\begin{rem}\label{rem: formality different cats}
    When talking about formality of spaces one traditionally means the formality of an associated algebra of forms (deRham or piecewise linear) in the category of unital cdgas. However a unital cdga is formal in its own category (i.e.\ quasi-isomorphic to its cohomology through unital cdgas) if and only if it is formal when considered as an $A_\infty$-algebra. Indeed, any quasi-isomorphism (in the category of non-unital $A_\infty$-algebras) between $A$ and $H(A)$ will respect the cohomological units, and is hence homotopic to a strictly unital map \cite[Cor. 3.2.4.4]{LH03}. Thus, by \cite[Theorem 3.3., 3.17 (3)]{CPRNW19}, \cite{S17}, $A$ is formal as a unital cdga.
\end{rem}

\section{Proof of \Cref{thmB}}

	\subsection{Setup}
	A map of $A_\infty$-algebras $f:A\to B$ is said to admit a ($A_\infty$-$A$-bimodule) homotopy retract if there exists a map $r:B\to A$ of $A_\infty$-$A$-bimodules such that $H(r_{0,0}\circ f_1)=\mathrm{Id}$. \footnote{We note that this terminology is justified since the $A_\infty$ $A$-bimodule automorphism of $A$ resulting from $r$ and $f$ will be a quasi-isomorphism and therefore a homotopy equivalence.}
	
	\begin{lem}\label{lem: minimal retract replacement}
	If $f:A\to B$ admits a homotopy retract and $A'\overset{p}{\to} A$, $B\overset{q}{\to} B'$ are quasi-isomorphisms of $A_\infty$-algebras, then the induced map $q\circ f\circ p: A'\to B'$ admits a homotopy retract.
	\end{lem}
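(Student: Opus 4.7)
The plan is to sandwich $r$ between $A_\infty$-bimodule quasi-inverses of $p$ and $q$, after transporting every object to the category of $A_\infty$-$A'$-bimodules via restriction of scalars. The genuine algebraic input is the existence of quasi-inverses of $A_\infty$-bimodule quasi-isomorphisms, which is already recorded in Section 2.6; the remainder is a bookkeeping exercise with the restriction functors $R_{(-)}$ together with a short cohomology chase.

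First I would endow $A$, $B$, $B'$ with $A_\infty$-$A'$-bimodule structures via restriction of scalars along the algebra maps $p:A'\to A$, $f\circ p:A'\to B$ and $q\circ f\circ p:A'\to B'$ respectively, and observe that $p$ and $q$ themselves become morphisms of $A_\infty$-$A'$-bimodules between these structures (as in the last paragraph of Section 2.4), while $R_p(r):B\to A$ becomes a morphism of $A_\infty$-$A'$-bimodules. Since the arity-$(0,0)$ component of any bimodule morphism and the underlying differential $d_{0,0}$ are unchanged under restriction of scalars, the maps $p$ and $q$ remain quasi-isomorphisms when viewed in the bimodule category.

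Invoking the existence of quasi-inverses cited in Section 2.6, I would then pick $A_\infty$-$A'$-bimodule morphisms $\tilde p:A\to A'$ and $\tilde q:B'\to B$ that are quasi-inverse to $p$ and $q$, and define
\[r' := \tilde p \circ R_p(r) \circ \tilde q : B' \to A',\]
which is a composition of $A_\infty$-$A'$-bimodule morphisms and hence of the required type. The final verification $H(r'_{0,0}\circ (q\circ f\circ p)_1)=\mathrm{Id}$ is a quick cohomology chase: the $(0,0)$-component of the composition equals $\tilde p_{0,0}\circ r_{0,0}\circ \tilde q_{0,0}$, the arity-$1$ part of a composition of $A_\infty$-algebra morphisms splits as $q_1\circ f_1\circ p_1$, and on cohomology $[\tilde p_{0,0}]$ and $[\tilde q_{0,0}]$ are inverse to $[p_1]$ and $[q_1]$ respectively, so everything collapses to $[p_1]^{-1}\circ H(r_{0,0}\circ f_1)\circ [p_1]=\mathrm{Id}$ by hypothesis.

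The only real obstacle is keeping the various bimodule structures straight — in particular, ensuring that restrictions of scalars along the different algebra maps glue consistently so that $\tilde q$, $R_p(r)$ and $\tilde p$ can be composed without ambiguity. This is handled by the functoriality identity $R_{g\circ f}=R_f\circ R_g$ recorded in Section 2.4, which for instance identifies the $A'$-bimodule structure on $B'$ obtained by successive restriction along $q$, $f$ and $p$ with the one obtained directly along $q\circ f\circ p$.
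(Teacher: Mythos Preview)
Your proposal is correct and follows essentially the same route as the paper's proof: restrict everything to $A_\infty$-$A'$-bimodules, invoke quasi-inverses of the bimodule quasi-isomorphisms induced by $p$ and $q$, and sandwich the restricted $r$ between them. Your write-up is in fact more explicit than the paper's about the compatibility of restrictions (via $R_{g\circ f}=R_f\circ R_g$) and about the final cohomology verification, but the underlying argument is identical.
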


\begin{proof} The algebra map $p$ induces a quasi-isomorphism $\tilde{p}\colon A'\rightarrow A$ of $A'$-bimodules. Similarly we obtain a $B$-bimodule quasi-isomorphism $B\rightarrow B'$ which we restrict to a map $\tilde{q}$ of $A'$-bimodules along $f\circ p$. There is an $A$-bimodule map $r$ as above, which induces a morphism $\tilde{r}$ of the restricted $A'$-bimodules. Now for $A'$-bimodule quasi-inverses $\tilde{p}',\tilde{q}'$ of $\tilde{p},\tilde{q}$ the composition $\tilde{p}'\circ \tilde{r}\circ \tilde{q}'$ gives the desired homotopy retract.\end{proof}

We will prove \Cref{thmB} for $k\in\mathbb{N}$ via an induction over $k$. Using \Cref{lem: minimal retract replacement} we may replace $A,B$ by arbitrary models in their quasi-isomorphism types. For some fixed $k\geq 2$ we assume that $B$ is minimal and $D^B$ is such that $d_i^B=0$ for $3\leq i\leq k+1$. We also assume $A$ to be minimal and that by induction $d_i^A=0$ for $3\leq i\leq k$. We note that since $A$ and $B$ are minimal the homotopy retract condition becomes $r_{0,0}\circ f_1= \mathrm{Id}_A$. Our aim is to define a coalgebra automorphism $\varphi\colon \overline{T}sA\rightarrow \overline{T}sA$ such that the transformed differential $\tilde{D}:=\varphi D\varphi^{-1}$ has vanishing components $\tilde{d}_i$ for $3\leq i\leq k+1$. In this setup $\varphi\colon (\overline{T}sA,D)\rightarrow (\overline{T}sA,\tilde{D})$ will be an isomorphism of $A_\infty$-algebras and hence finish the induction.

To treat the case $k=\infty$, one can then compose all of these automorphisms. It will be clear from their explicit form that this composition involves only a finite number of terms in every arity, so the infinite composition gives a morphism of $A_\infty$-algebras.

\begin{rem}
 The above induction can be made sense of from an obstruction theoretic point of view akin to \cite{HS79}: one can view the formality obstruction $d_{k+1}\colon A^{\otimes (k+1)}\rightarrow A$ as a cocycle in the complex of coderivations on $\overline{T}sA$. Our automorphism $\varphi$ will be defined via a map $\varphi_k\colon A^{\otimes k}\rightarrow A$, which can be interpreted as a coderivation making the above cocycle exact.
\end{rem}

	\subsection{Ansatz}
	Our Ansatz will be to define the components of $\varphi$ as $\varphi_j=0$ unless $j=1,k$, with $\varphi_1=\mathrm{Id}$ and $\varphi_k$ to be determined. The transformed differential $\tilde{D}:=\varphi D\varphi^{-1}$ satisfies $\varphi \circ D= \tilde D\circ \varphi$. Breaking down this equation according to its components, we obtain $\tilde{d}_j=d_j$ for $j\leq k$ and
	\begin{equation}
		\tilde{d}_{k+1}+d_2(\varphi_k\otimes 1+1\otimes \varphi_k)= d_{k+1}+\varphi_k\left(\sum_{p+q=k-1}1^{\otimes p}\otimes d_2\otimes 1^{\otimes q}\right).
	\end{equation}
	Thus, in order to achieve $\tilde{d}_{k+1}=0$, we need to choose $\varphi_k$ so that 
	\begin{equation}
	d_{k+1}=d_2(\varphi_k\otimes 1+1\otimes \varphi_k)-\varphi_k\left(\sum_{p+q=k-1}1^{\otimes p}\otimes d_2\otimes 1^{\otimes q}\right).
	\end{equation}
	Abusing notation slightly, we will write $[d_2,\varphi_k]$ for the right hand side of this equation. We note that $[d_2,\varphi_k]$ is a linear expression in $\varphi_k$.
	We define 
	\begin{align}
		\varphi_{k}^{(i)}&:= \sum_{a+b=i}r_{a,b}(1^{\otimes a}\otimes f_{k-i}\otimes 1^{\otimes b}),\\
		\varphi_{k}&:=\sum_{i=0}^{k-2} \left(1-\frac {i}{k-1}\right)\cdot \varphi_k^{(i)}.
	\end{align}

	\subsection{Preliminary calculations} We now do the necessary calculations to verify our Ansatz works, i.e.\ that $d_{k+1} = [d_2,\varphi_k]$. Schematically, we use the morphism equations to move the instances of $d$ between those of $r$ and $f$, and look for cancellation of terms.
	
	\begin{lem}[Morphism equations for $r$]\label{lem: morphism equations for r}
		For $p+q\leq k-1$, the morphism  \cref{eqn: Bimodule morphism} for $r$ on $sA^{\otimes p} \otimes sB\otimes sA^{\otimes q}$ reads
		\begin{align*}
			d_2^A(r_{p,q-1}\otimes 1_A+1_A\otimes r_{p-1,q})=&\phantom{+}\sum_{a<p}r_{a,q}(1_A^{\otimes a}\otimes d_2^B(f_{p-a}\otimes 1_B)\otimes 1_A^{\otimes q})\\ &+ \sum_{b<q}r_{p,b}(1_A^{\otimes p}\otimes d_2^B(1_B\otimes f_{q-b})\otimes 1_A^{\otimes b})\\ &+ \sum_{x+y=p-2} r_{p-1,q}(1_A^{\otimes x}\otimes d_2^A\otimes 1_A^{\otimes y}\otimes 1_B\otimes 1_A^{\otimes q})\\ &+ \sum_{x+y=q-2}r_{p,q-1}(1_A^{\otimes p}\otimes 1_B\otimes 1_A^{\otimes x}\otimes d_2^A\otimes 1_A^{\otimes y}).
		\end{align*}
	\end{lem}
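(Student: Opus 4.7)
The plan is to evaluate the general bimodule morphism equation \cref{eqn: Bimodule morphism} on the input $sA^{\otimes p}\otimes sB\otimes sA^{\otimes q}$ and systematically eliminate the terms that vanish under the standing hypotheses. First, I would analyze its right-hand side $\sum d^A_{x,y}(1^{\otimes x}\otimes r_{i,j}\otimes 1^{\otimes y})$: since the self-bimodule structure on $A$ is $d^A_{x,y}=d^A_{x+y+1}$, and by minimality together with the inductive assumption the only nonzero $d^A_t$ of arity $t\le k$ is $d^A_2$, only the pairs $(x,y)=(1,0)$ and $(0,1)$ survive. This reduces the right-hand side to $d^A_2(r_{p,q-1}\otimes 1_A+1_A\otimes r_{p-1,q})$, which is exactly the left-hand side of the equation stated in the lemma.

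The bulk of the work lies in expanding the left-hand side $\sum r_{a,c}(1^{\otimes a}\otimes d_b\otimes 1^{\otimes c})$, where the interpretation of $d_b$ depends on whether the block of $b$ consecutive inputs contains the distinguished $sB$ factor. I would split the sum into three groups accordingly. When the block lies entirely in the left $sA^{\otimes p}$ portion, $d_b=d_b^A$, and only $b=2$ contributes by the hypotheses on $A$; after re-indexing, one obtains exactly the third term in the statement. The case where the block is contained in the right $sA^{\otimes q}$ portion is symmetric and yields the fourth term.

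The remaining, and I expect most delicate, case is when the block contains $sB$. Here $d_b$ is the restricted-scalars operation $d^{B/A}_{u,v}$ with $u=p-a$ and $v=a+b-p-1$, which unfolds via \cref{eqn: restriction of scalars} into a sum of terms $d^B_{r+s+1}(f_{i_1}\otimes\cdots\otimes f_{i_r}\otimes 1_B\otimes f_{j_1}\otimes\cdots\otimes f_{j_s})$. The key observation is that the arity $r+s+1$ is bounded above by $u+v+1=b\le p+q+1\le k$, so the hypotheses $d^B_1=0$ and $d^B_i=0$ for $3\le i\le k+1$ force $r+s=1$. Only $(r,s)=(1,0)$ and $(0,1)$ remain, corresponding respectively to $d^B_2(f_u\otimes 1_B)$ (when $v=0$, $u\ge 1$) and $d^B_2(1_B\otimes f_v)$ (when $u=0$, $v\ge 1$); these produce the first two terms in the statement.

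Equating the simplified sides of \cref{eqn: Bimodule morphism} then yields the claim. The principal obstacle is bookkeeping: one has to track carefully how the triple $(a,b,c)$ indexing the sum correlates with the bimodule bidegree $(u,v)$ and with the surviving arguments of the $f_i$'s, and rename summation variables so that the four surviving classes of terms display exactly as in the statement.
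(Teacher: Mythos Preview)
Your proposal is correct and takes essentially the same approach as the paper's proof, which is a terse two-sentence reference to \cref{eqn: Bimodule morphism} and \cref{eqn: restriction of scalars} together with the observation that in the relevant arity range only $d_2^A$ and $d_2^B$ survive. Your more explicit case split and bookkeeping (including the bound $r+s+1\le u+v+1=b\le p+q+1\le k$) spells out exactly what the paper means by ``in our range, all $d_j^A=d_j^B=0$ unless $j=2$'' and by its description of $d^{B/A}_{r,0}$ and $d^{B/A}_{0,s}$.
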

\begin{proof}
	This follows from \cref{eqn: Bimodule morphism} and \cref{eqn: restriction of scalars}, where we use that in our range, all $d_j^A=d_j^B=0$ unless $j=2$. In particular, $d_{r,s}^{B/A}=0$ unless $r=0$ or $s=0$. In those cases, we have $d_{r,0}^{B/A}=d_2^B(f_r\otimes 1_B)$ and $d_{0,s}^{B/A}=d_2^B(1_B\otimes f_s)$.
\end{proof}
	
	\begin{lem}[The first half of the commutator]\label{lem: first half of commutator}
		For $i\leq k-2$, there is an equality 
		\begin{align*}
			d_2(\varphi_k^{(i)}\otimes 1_A+1_A\otimes \varphi_k^{(i)})=&\phantom{+}\sum_{l\leq i}\sum_{a+b=l}r_{a,b}(1_A^{\otimes a}\otimes d_2(f_{k-i}\otimes f_{1+i-l}+f_{1+i-l}\otimes f_{k-i})\otimes 1_A^{\otimes b})\\
			&+\sum_{a+b=i}\sum_{x+y=a-1} r_{a,b}(1_A^{\otimes x}\otimes d_2\otimes 1_A^{\otimes y}\otimes f_{k-i}\otimes 1_A^{\otimes b})\\
			&+\sum_{a+b=i}\sum_{x+y=b-1} r_{a,b}(1_A^{\otimes a}\otimes f_{k-i}\otimes 1_A^{\otimes x}\otimes d_2\otimes 1_A^{\otimes y}).
		\end{align*}
	\end{lem}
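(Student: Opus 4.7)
The plan is to derive the identity by pre-composing the morphism equation for $r$ (Lemma~\ref{lem: morphism equations for r}) with $1_A^{\otimes p}\otimes f_{k-i}\otimes 1_A^{\otimes q}$ for each $(p,q)$ with $p+q=i+1$, then summing over $(p,q)$ and matching the resulting expressions term-by-term against the three sums in the statement. As a first step, I would decompose the left-hand side as $\sum_{a+b=i}(T^R_{a,b}+T^L_{a,b})$, introducing the shorthand $T^R_{a,b}:=d_2(r_{a,b}\otimes 1_A)\circ(1_A^{\otimes a}\otimes f_{k-i}\otimes 1_A^{\otimes b+1})$ and $T^L_{a,b}:=d_2(1_A\otimes r_{a,b})\circ(1_A^{\otimes a+1}\otimes f_{k-i}\otimes 1_A^{\otimes b})$.

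For each $(p,q)$ with $p+q=i+1$ (within the range $p+q\leq k-1$ permitted by Lemma~\ref{lem: morphism equations for r} since $i\leq k-2$), pre-composing the morphism equation at $(p,q)$ with $1_A^{\otimes p}\otimes f_{k-i}\otimes 1_A^{\otimes q}$ turns its left-hand side into $T^R_{p,q-1}+T^L_{p-1,q}$. Summing over all such $(p,q)$ and reindexing by $(a,b):=(p,q-1)$ on the $T^R$-contribution and $(a,b):=(p-1,q)$ on the $T^L$-contribution---adopting the convention that $r$ with any negative index vanishes---yields exactly $\sum_{a+b=i}(T^R_{a,b}+T^L_{a,b})$, the lemma's left-hand side.

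It then remains to expand the sum of pre-composed right-hand sides of Lemma~\ref{lem: morphism equations for r}. Since pre-composition fills each $1_B$ slot with $f_{k-i}$, the two types of terms containing $d_2^B(f_\bullet\otimes 1_B)$ and $d_2^B(1_B\otimes f_\bullet)$ become, respectively, $r_{a,b}(1_A^{\otimes a}\otimes d_2(f_{1+i-l}\otimes f_{k-i})\otimes 1_A^{\otimes b})$ and $r_{a,b}(1_A^{\otimes a}\otimes d_2(f_{k-i}\otimes f_{1+i-l})\otimes 1_A^{\otimes b})$, where $l:=a+b$ and the subscript $1+i-l$ arises from an arity count; together they reproduce the first sum of the statement. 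The two types involving $d_2^A$ acting on consecutive $A$-inputs yield the second and third sums directly. The main obstacle I anticipate is the careful bookkeeping of summation ranges after the reindexing: one must verify that the restrictions $j<p$, $b'<q$, $x+y=p-2$, and $x+y=q-2$ from Lemma~\ref{lem: morphism equations for r} translate to the ranges $0\leq l\leq i$ and $a,b\geq 0$ in the first sum, and to $x+y=a-1\geq 0$ and $x+y=b-1\geq 0$ (forcing $a\geq 1$ and $b\geq 1$ respectively) in the second and third sums.
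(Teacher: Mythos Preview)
Your proposal is correct and takes essentially the same approach as the paper, which simply says ``plug $f_{k-i}$ into the equation in Lemma~\ref{lem: morphism equations for r} when $p+q=i+1$ and then sum over all these pairs $(p,q)$.'' You have spelled out the reindexing and bookkeeping that the paper leaves implicit, and your anticipated obstacle (tracking the ranges $a<p$, $b'<q$, $x+y=p-2$, $x+y=q-2$ through the substitution) is exactly the content of the verification; your outline handles it correctly.
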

\begin{proof}
	This follows by plugging $f_{k-i}$ into the equation in \Cref{lem: morphism equations for r} when $p+q=i+1$ and then summing over all these pairs $(p,q)$.
\end{proof}

\begin{lem}[The second half of the commutator]\label{lem: second half of commutator}
	For any $0\leq i\leq k-2$, there is an equality
	\begin{align*}
		\varphi_k^{(i)}\left(\sum_{p+q=k-1}1^{\otimes p}\otimes d_2\otimes 1^{\otimes q}\right)=&\phantom{+}\sum_{a+b=i}\sum_{x+y=a-1}r_{a,b}(1^{\otimes x}_A\otimes d_2\otimes 1_A^{\otimes y}\otimes f_{k-i}\otimes 1_A^{\otimes b})\\
		&+\sum_{a+b=i}\sum_{r+s=k+1-i}r_{a,b}(1^{\otimes a}_A\otimes d_2(f_r\otimes f_s)\otimes 1_A^{\otimes b})\\
		&+\sum_{a+b=i}\sum_{x+y=b-1}r_{a,b}(1_A^{\otimes a}\otimes f_{k-i}\otimes 1_A^{\otimes x}\otimes d_2\otimes 1_A^{\otimes y})\\
		&-\delta_{0}^i\cdot d_{k+1},
	\end{align*}
where $\delta_{0}^i=0$ for $i\neq 0$ and $\delta^{0}_0=1$.
\end{lem}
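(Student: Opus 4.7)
The plan is to expand $\varphi_k^{(i)}=\sum_{a+b=i}r_{a,b}(1^{\otimes a}\otimes f_{k-i}\otimes 1^{\otimes b})$ and classify the left-hand side according to where the inserted $d_2$ lands among the three blocks of each summand: the front block of $a$ identities on $sA$, the $f_{k-i}$ block in the middle, and the trailing block of $b$ identities on $sA$. For each fixed pair $(a,b)$ with $a+b=i$, as $p$ runs from $0$ to $k-1$ the position of the $d_2$ splits into three disjoint regimes: $0\leq p\leq a-1$ (front block), $a\leq p\leq a+k-i-1$ (inside $f_{k-i}$), and $a+k-i\leq p\leq k-1$ (trailing block). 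After the obvious re-indexing $p=x$, the first and third regimes contribute exactly the first and third sums on the right-hand side of the claim; no structural identities are needed here, only that the sum over $p$ in each regime has cardinality $a$ and $b$ respectively, matching $\sum_{x+y=a-1}$ and $\sum_{x+y=b-1}$.

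For the middle regime, pulling $d_2$ inside $f_{k-i}$ yields
\[\sum_{a+b=i}r_{a,b}\Bigl(1^{\otimes a}\otimes f_{k-i}\bigl(\sum_{u+v=k-i-1}1^{\otimes u}\otimes d_2\otimes 1^{\otimes v}\bigr)\otimes 1^{\otimes b}\Bigr),\]
which I would rewrite using the $A_\infty$-morphism equation \cref{eqn: Algebra morphism} for $f$ at arity $n=k-i+1$. By the standing minimality assumptions $d_1^A=d_1^B=0$, $d_j^A=0$ for $3\leq j\leq k$ and $d_j^B=0$ for $3\leq j\leq k+1$, together with the arity bound $r\leq n=k-i+1<k+2$ that kills every $d_r^B$ with $r\geq k+2$, the morphism equation reduces to
\[f_{k-i}\bigl(\sum_{u+v=k-i-1}1^{\otimes u}\otimes d_2\otimes 1^{\otimes v}\bigr)+\delta_0^i\cdot f_1\circ d_{k+1}^A=\sum_{r+s=k-i+1}d_2(f_r\otimes f_s),\]
the Kronecker factor appearing precisely because the $b=k+1$, $a=c=0$ term on the left survives only when $n=k+1$, i.e.\ when $i=0$. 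Substituting this identity into the middle regime produces the second sum in the claim together with an error term $-\delta_0^i\cdot r_{0,0}(f_1\circ d_{k+1}^A)$; since $i=0$ forces $a=b=0$, the retract identity $r_{0,0}\circ f_1=\mathrm{Id}_A$ collapses this to $-\delta_0^i\cdot d_{k+1}$, giving the fourth term.

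The main obstacle is bookkeeping: one has to verify carefully that the morphism equation for $f$ really reduces to only the two enumerated terms on the left (the $b=2$ term and, exceptionally at $i=0$, the $b=k+1$ term) and to the single $d_2(f_r\otimes f_s)$ sum on the right, and that the three regimes of the $p$-sum partition $\{0,\dots,k-1\}$ without overlap for every $(a,b)$ with $a+b=i$ (including the edge cases $a=0$ or $b=0$, where the corresponding regime and the matching sum $\sum_{x+y=-1}$ are both empty). Once this minimality-based filtering has been done, the three regimes recombine into precisely the claimed four-term right-hand side.
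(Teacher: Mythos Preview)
Your proposal is correct and follows essentially the same approach as the paper: you expand the definition of $\varphi_k^{(i)}$, split according to where $d_2$ lands, and for the middle block invoke the simplified morphism equation \eqref{eqn: Algebra morphism} for $f$ at arity $k+1-i$, finishing with $r_{0,0}\circ f_1=\mathrm{Id}$. The paper's proof is terser but does exactly this; your more explicit bookkeeping of the three regimes and the minimality-based filtering of the morphism equation is simply a spelled-out version of what the paper compresses into two sentences.
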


\begin{proof}
The formula in the statement follows by applying the definition to the left hand side and plugging in the following simplified version of \cref{eqn: Algebra morphism} for $k+1-i$ inputs (where we have only used that most components of $d$ vanish):
\begin{align*}
	\sum_{x+y=k-i-1}f_{k-i}(1_A^{\otimes x}\otimes d_2\otimes 1_A^{\otimes y}) + \delta_{0}^i\cdot f_1(d_{k+1})=\sum_{r+s=k+1-i} d_2(f_r\otimes f_s).
\end{align*}
Note that to conclude we use $r_{00}\circ f_{1}=\mathrm{Id}$.
\end{proof}
By subtracting the two sides of the commutator, we obtain:
\begin{cor}[The full commutator on each component]\label{cor: full commutator}
	For any $0\leq i\leq k-2$, there is an equality
	\begin{align*}
		[d_2,\varphi_k^{(i)}]=&\phantom{+}\sum_{l<i}\sum_{a+b=l} r_{a,b}(1_A^{\otimes a}\otimes d_2(f_{k-i}\otimes f_{1+i-l}+f_{1+i-l}\otimes f_{k-i})\otimes 1_A^{\otimes b})\\
		&-\sum_{a+b=i}\sum_{\substack{\, r+s=k+1-i\\r,s\geq 2}} r_{a,b}(1_A^{\otimes a}\otimes d_2(f_r\otimes f_s)\otimes 1_A^{\otimes b})\\
		&+\delta_{0}^i\cdot d_{k+1}.
	\end{align*}
\end{cor}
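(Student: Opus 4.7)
The plan is to obtain the corollary by directly subtracting the equation in Lemma \ref{lem: second half of commutator} from the one in Lemma \ref{lem: first half of commutator}, since by construction $[d_2,\varphi_k^{(i)}] = d_2(\varphi_k^{(i)}\otimes 1_A + 1_A\otimes\varphi_k^{(i)}) - \varphi_k^{(i)}\bigl(\sum_{p+q=k-1} 1^{\otimes p}\otimes d_2\otimes 1^{\otimes q}\bigr)$. Given this, the proof reduces to bookkeeping: identifying which summands cancel pairwise and which combine into the listed terms.

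First I would observe that the second summand of Lemma \ref{lem: first half of commutator} is identical to the first summand of Lemma \ref{lem: second half of commutator}, and likewise the third summand of the first lemma agrees with the third summand of the second lemma; both pairs cancel in the subtraction. The term $-\delta_0^i \cdot d_{k+1}$ from Lemma \ref{lem: second half of commutator} flips sign to become the $+\delta_0^i \cdot d_{k+1}$ appearing in the corollary. These two observations account for the third line of the corollary and for three of the six summands of the two lemmas.

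The remaining point, which I expect to be the only nontrivial one, is to check that the first summand of Lemma \ref{lem: first half of commutator} minus the second summand of Lemma \ref{lem: second half of commutator} collapses to the first two lines of the corollary. The key observation is that the $l=i$ slice of the former reads $\sum_{a+b=i} r_{a,b}(1_A^{\otimes a}\otimes d_2(f_{k-i}\otimes f_1 + f_1\otimes f_{k-i})\otimes 1_A^{\otimes b})$, since $1+i-l = 1$ in this case. This matches exactly the degenerate cases $r=1$ (with $s=k-i$) and $s=1$ (with $r=k-i$) of the $r+s=k+1-i$ sum in the latter, which are the only cases where $f_1$ enters. Cancelling these boundary contributions shrinks the index range in the first summand to $l<i$ and restricts the indices $(r,s)$ in the second summand to $r,s\geq 2$, producing precisely the form asserted. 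The main obstacle is just to avoid sign errors and to keep track of the boundary ranges on the indices $l$, $r$, and $s$ during this cancellation.
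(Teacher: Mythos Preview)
Your proposal is correct and follows exactly the paper's approach: the paper simply states that the corollary is obtained ``by subtracting the two sides of the commutator,'' and your detailed bookkeeping of which summands cancel and how the $l=i$ slice matches the $r=1$ or $s=1$ boundary terms is precisely what that subtraction amounts to.
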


\subsection{Computing the commutator.}
Recall that we need to show that $[d_2,\varphi_k]=d_{k+1}$. By the formula in \Cref{cor: full commutator}, we obtain
\begin{align*}
	[d_2,\varphi_k]=&\sum_{i=0}^{k-2}\left(1-\frac{i}{k-1}\right)\cdot [d_2,\varphi_k^{(i)}]\\
	& =d_{k+1} + \sum_{\substack{a+b+r+s=k+1\\a,b\geq 0\\r,s\geq 2}} C_{a,b,r,s}\cdot r_{a,b}(1^{\otimes a}\otimes d_2(f_r\otimes f_s)\otimes 1^{\otimes b})
\end{align*}
for some coefficients $C_{a,b,r,s}\in \mathbb{Q}$. To compute these coefficients, we note that they only receive contributions from the summands corresponding to $i=a+b, a+b+r-1, a+b+s-1$. 

For the moment, let us assume $r\neq s$, so that these are really three distinct summands. Then, using that $a+b+r+s=k+1$, we have
\[
C_{a,b,r,s}=-\left(1-\frac{a+b}{k-1}\right)+\left(1-\frac{a+b+r-1}{k-1}\right)+\left(1-\frac{a+b+s-1}{k-1}\right)=0.
\]
In the case that $r=s$, the summand $r_{a,b}(1_A^{\otimes a}\otimes d_2(f_r\otimes f_r)\otimes 1_A^{\otimes b})$ appears twice in $[d_2,\varphi_k^{(a+b+r-1)}]$, so the same calculation remains valid.


\begin{rem}\label{descent} From \Cref{thmB} we recover descent of formality in characteristic zero, see \cite[Theorem 12.1]{S77}. Namely, let $k \subset K$ be an extension of fields of characteristic zero, and let $A$ be a $k$-cdga such that $A\otimes_k K$ is formal as a $K$-cdga. Then $A \otimes_k K$ is also formal as a $k$-cdga. Picking a $k$-vector space retract for the inclusion $k \subset K$ induces an $A$-module retract of $A \hookrightarrow A \otimes_k K$, so we are in the setting of \Cref{thmB}. \end{rem}

\section{Proof of \Cref{thmA}}

It remains to show how to arrive at the setting of \Cref{thmB} from Situations  \ref{sit:PD} and \ref{sit:mfds}. 
\subsection{Algebraic preliminaries} Let $A$ be a cdga and $M$ a dg-module over $A$. Since $A$ is commutative, we will not distinguish between left-, right-, and bimodules. For instance, a right dg module structure induces a dg bimodule structure via $a.m:=(-1)^{|m||a|} m.a$. 

For any closed element $m\in M^0$, we obtain a degree $0$ map of dg modules $A\to M$ via $a\mapsto a.m$. For any $k$, the $k$-shifted dual space $D_kM$, with grading $(D_kM)^l:=(M^{k-l})^\vee$, is again a dg module via setting $(d\varphi)(m):=(-1)^{|\varphi|+1}\varphi(dm)$ and $(\varphi.a)(m):=\varphi(a.m)$. In particular, for any closed element $\phi\in (D_kM)^0=(M^k)^\vee$, we obtain a map of dg modules $A\to D_kM$ as above, which we denote by $m_\phi$. The induced map in cohomology depends only on the class $[\phi]\in H^0(D_kM)$. If $m_\phi$ is a quasi-isomorphism, we call $[\phi]$ an $M$-dualizing class. For example, a cdga $A$, considered as a module over itself, satisfies Poincar\'e duality on its cohomology if and only if there exists an $A$-dualizing class.\footnote{We note that in case $M=A^\vee$, the map $m_\phi$ is an $\infty$-inner-product in the sense of \cite{T08}, \cite{TZS07}.}

\subsection{Finishing the proof} The following proposition abstracts the algebraic structure underlying Situations \ref{sit:PD} and \ref{sit:mfds}:
\begin{prop}
Let $f:A\to B$ be a map of cdga's and let $M\subseteq A$, $N\subseteq B$ be $A$-submodules such that $f(M)\subseteq N$. Assume that
\begin{enumerate}
    \item There exists an $M$-dualizing class $\mathfrak c\in H^0(D_kM)$.
    \item There exists a class $\mathfrak c'\in H^0(D_kN)$ such that $f_*\mathfrak c'=\mathfrak c$.
\end{enumerate}
Then, if $B$ is formal, so is $A$.
\end{prop}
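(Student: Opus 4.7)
The plan is to verify that $f\colon A \to B$, viewed as a morphism of $A_\infty$-algebras, admits an $A_\infty$-$A$-bimodule homotopy retract, at which point \Cref{thmB} yields formality of $A$. The hypotheses are rigged precisely to mimic the commutative square \Cref{diag: duality} in the abstract algebraic setting.

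First, I would build a strict $A$-bimodule map $g\colon B \to D_kM$ as the composition
\[
B\ \xrightarrow{\ m_{\mathfrak c'}\ }\ D_kN\ \xrightarrow{\ (f|_M)^\vee\ }\ D_kM,
\]
where $m_{\mathfrak c'}\colon B \to D_kN$ is defined exactly as in the algebraic preliminaries but using a (tacit) $B$-module structure on $N\subseteq B$, and $(f|_M)^\vee$ is the $A$-linear dual of the restriction $f|_M\colon M \to N$. Unraveling, for $a\in A$ and $m\in M$ one computes $g(f(a))(m) = \mathfrak c'(f(a\cdot m)) = \bigl((f|_M)^\vee \mathfrak c'\bigr)(a\cdot m)$, so $g\circ f = m_{(f|_M)^\vee \mathfrak c'}$ as a strict $A$-module map. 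By hypothesis (2), $(f|_M)^\vee\mathfrak c' - \mathfrak c = d\eta$ for some $\eta \in (D_kM)^{-1}$, and then $a\mapsto m_\eta(a)$ is an $A$-linear chain homotopy from $g\circ f$ to $m_{\mathfrak c}$.

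Next, by hypothesis (1), $m_{\mathfrak c}$ is a quasi-isomorphism of $A$-modules, so it admits an $A_\infty$-$A$-bimodule quasi-inverse $h\colon D_kM \to A$ by the material recalled in Section 2. Setting $r := h \circ g$ in the $A_\infty$-$A$-bimodule category produces a map $B \to A$ with
\[
H(r_{0,0}\circ f_1)\ =\ H(h_{0,0})\circ H(g\circ f)\ =\ H(h_{0,0})\circ H(m_{\mathfrak c})\ =\ \mathrm{Id}_{H(A)},
\]
so $r$ is the required homotopy retract of $f$. An application of \Cref{thmB} finishes the proof.

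The main technical point is the invocation of the $A_\infty$-bimodule quasi-inverse $h$ and the subsequent bookkeeping of compositions in the $A_\infty$-bimodule category, since $m_{\mathfrak c}$ is only a cohomology isomorphism and not a strict one; once the generalities of Section 2 are in place, this is standard. A secondary subtlety is the implicit use of a $B$-module structure on $N$ when defining $m_{\mathfrak c'}$ on $B$, which is automatic in both geometric applications (where $N$ equals $B$ itself, or is the $B$-submodule of compactly supported forms inside $B$).
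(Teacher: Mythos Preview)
Your argument is correct and matches the paper's approach: build the square of dg-$A$-module maps, invert $m_{\mathfrak c}$ in the $A_\infty$-bimodule category to obtain the homotopy retract, and apply \Cref{thmB}. The paper avoids your chain homotopy $\eta$ by simply taking $\phi:=\psi\circ f$ as the representative of $\mathfrak c$ (so the square commutes strictly), and it closes by invoking \Cref{rem: formality different cats} to pass from $A_\infty$-formality back to unital cdga formality; your remark about the tacit $B$-module structure on $N$ applies equally to the paper's diagram and is indeed satisfied in both geometric applications.
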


\begin{proof}
Pick representatives $\mathfrak c=[\phi],\mathfrak c'=[\psi]$ such that $\psi\circ f=\phi$. From the assumptions, we obtain a commutative diagram of dg-$A$-modules:
   \[
    \begin{tikzcd}
    D_kM & D_kN \ar[l,swap, "D_kf"]\\ A\ar[u, "m_\phi"]\ar[r, "f"]& B\ar[u, swap, "m_\psi"]
    \end{tikzcd}
    \]
    Indeed, let $a\in A$, $m\in M$. Then \begin{align*}
         D_kf(m_\psi(f(a)))(m)&=(m_\psi(f(a)))(f(m))=(f(a).\psi) (f(m))\\
         &=\psi(f(am))=\phi(am)=m_\phi(a)(m).
            \end{align*}
By assumption, $m_\phi$ is a quasi-isomorphism. Therefore, considering this as a diagram of $A_\infty$-$A$-bimodules, we can find a quasi-inverse to $m_\phi$ and so $f$ admits a homotopy retract. Then by \Cref{thmB}, $A$ is formal as an $A_\infty$-algebra. This implies formality as a (unital) cdga by \Cref{rem: formality different cats}.\end{proof}

To prove \Cref{thmA} in Situation \ref{sit:PD}, pick $A=A_{PL}(X)$, $M=A$, $B=A_{PL}(Y)$, $N=B$. To prove it in Situation \ref{sit:mfds}, pick $A=A_X$ the differential forms on $X$, $M=A_{X,c}$ the compactly supported differential forms, $B=A_Y$ and $N=A_{Y,c}$ and use Poincar\'e duality in the form \cite[5.12]{GHV72}. Note that $H^{BM}_\bullet(X)\cong {H_c^\bullet}(X)^\vee$, so $(2)$ above is in fact equivalent to the fundamental class in Borel--Moore homology being hit, as stated in the introduction.

\end{document}